\newcommand{\C}{\mathbb{C}}
\newcommand{\Q}{\mathbb{Q}}
\newcommand{\Z}{\mathbb{Z}}
\renewcommand{\P}{\mathbb{P}}
\newcommand{\fa}{\mathfrak{a}}
\newcommand{\fg}{\mathfrak{g}}
\newcommand{\fgl}{\mathfrak{gl}}
\newcommand{\fo}{\mathfrak{o}}
\newcommand{\fsp}{\mathfrak{sp}}
\newcommand{\fz}{\mathfrak{z}}
\newcommand{\cF}{\mathcal{F}}
\newcommand{\cW}{\mathcal{W}}
\newcommand{\bD}{\mathbf{D}}
\newcommand{\even}{\mathrm{even}\,}
\newcommand{\Hom}{\mathrm{Hom}}
\newcommand{\odd}{\mathrm{odd}\,}
\newcommand{\prim}{\mathrm{Prim}\,}
\newcommand{\tr}{\mathrm{tr}}
\newcommand{\Tr}{\mathrm{Tr}}
\newcommand{\rank}{\mathrm{rank}}
\newcommand{\Rank}{\mathrm{Rank}}
\newcommand{\vac}{\vert\,\mathrm{vac}\rangle}
\newtheorem{theorem}{Theorem}[section]
\newtheorem{proposition}[theorem]{Proposition}
\newtheorem{lemma}[theorem]{Lemma}
\newtheorem{corollary}[theorem]{Corollary}
\theoremstyle{definition}
\newtheorem{example}[theorem]{Example}
\newtheorem{remark}[theorem]{Remark}
\begin{document}

\keywords{Infinite dimensional Lie algebras, Lie algebra Homology, Cyclic and Hochschild Homology, smash product, spectral sequence.}
\mathclass{Primary 17B65, 16S35; Secondary 16E40.}

\abbrevauthors{A. Fialowski and K. Iohara}
\abbrevtitle{Generalized Jacobi matrices}

\title{On Lie algebras of generalized Jacobi matrices}

\author{Alice Fialowski}
\address{University of P\'{e}cs and E\"{o}tv\"{o}s Lor\'{a}nd University\\
Budapest, Hungary\\
E-mail: fialowsk@ttk.pte.hu, fialowsk@cs.elte.hu}

\author{Kenji Iohara}
\address{Univ Lyon, Universit\'{e} Claude Bernard Lyon 1\\
CNRS UMR 5208, Institut Camille Jordan, \\
F-69622 Villeurbanne, France\\
E-mail: iohara@math.univ-lyon1.fr}

\maketitlebcp

\begin{abstract}
{ In these lecture notes, we consider infinite dimensional Lie algebras of generalized Jacobi matrices $\fg J(k)$ and $\fgl_\infty(k)$, which are important in soliton theory, and their orthogonal and symplectic subalgebras. In particular,  we construct the homology ring of the Lie algebra $\fg J(k)$ and of the orthogonal and symplectic subalgebras.}
\end{abstract}

\section{Introduction}
\medskip

In this expository article, we consider a special type of general linear Lie algebras
of infinite rank over a field $k$ of characteristic zero, and compute their homology with trivial coefficients. 

Our interest in this topic came from an old short note (2 pages long) of Boris Feigin and Boris Tsygan (1983, \cite{FT}). They stated some results on the homology with trivial coefficients of the Lie algebra of generalized Jacobi matrices over a field of characteristic $0$. 
It was a real effort to piece out the precise statements in that densely encoded note, not to speak about the few line proofs. 
%In fact, we approached both authors, but there was no reply.
In the process of understanding the statements and proofs, we got involved in the topic, found different generalizations and figured out correct approaches. We should also mention that their old results generated much interest during these 36 years. In the meantime, two big problems have beed solved, which helped us to understand the statements and work out the right proofs. One is the Loday-Quillen-Tsygan theorem (see (3.1)) and the other is the generalization of the Hochschild-Serre type spectral sequence by Stefan (see (3.4)), without which the statements of the old note could not be justified.

Consider the Lie algebra $\fg J(k)$ of generalized Jacobi matrices, namely, infinite size matrices $M=(m_{i,j})$ indexed over $\Z$ such that $m_{i,j}=0$ if $\vert i-j\vert >N$ for some $N$ depending on the matrix $M$. The original Jacobi operator, also known as Jacobi matrix, is a symmetric linear operator acting on sequences which is given by an infinite tridiagonal matrix  (a band matrix that has nonzero elements only on the main diagonal, the first diagonal below this, and the first diagonal above the main diagonal.) It is commonly used to specify systems of orthonormal polynomials over a finite, positive Borel measure. This operator is named after Carl Gustav Jacob Jacobi, who introduced in 1848 tridiagonal matrices and proved the following Theorem: every symmetric matrix over a principal ideal domain is congruent to a tridiagonal matrix (see \cite{K} and e.g. \cite{Sz}).  
%Such infinite matrices appeared in the analytic theory on orthogonal polynomials (see, e.g., \cite{Sz}). 
Since then, Jacobi matrices play an important role in different branches of mathematics, like in topology (Bott's periodicity theorem on homotopy groups), stable homotopy theory, algebraic geometry and $C^*$-algebras (see \cite{K1}). They are also used to show some interesting properties in K-theory. For instance, Karoubi used them to prove the conjecture of Atiyah-Singer about the classifying space, see \cite{K2}.

The Lie algebra $\fgl_\infty(k)$ of finitely supported infinite size matrices indexed over $\Z$ can be naturally viewed as a subalgebra of $\fg J(k)$. 

The Lie algebra $\fg J(k)$ has typical infinite dimensional nature. For example, the two matrices
\[ P:=\sum_{i \in \Z} iE_{i-1,i}, \qquad Q:=\sum_{i \in \Z} E_{i+1,i}, \]
where $E_{i,j}$ is the matrix unit with $1$ on the $(i,j)$-entry, belong to $\fg J(k)$ but not to $\fgl_\infty(k)$.  They satisfy
\begin{enumerate}
\item ${}^tQ\cdot Q=I$, 
\item $PQ-QP=I$,
\end{enumerate}
where $I$ denotes the identity matrix 
$\sum_{i \in \Z} E_{i,i} \in \fg J(k)$. This matrix $I$ does not even belong to $\fgl_\infty(k)$ !  
(Off course, the matrices $P$ and $Q$ are matrix representations of $\dfrac{d}{dt}$ and $t \cdot $ on the vector space $\C[t^{\pm 1}]$ with respect to the basis $\{t^{i}\}_{i \in \Z}$.)

Such algebras show up in many areas of mathematics and physics. For instance, they are used to describe the solitons of the Kadomtsev-Petviashvili (KP) type hierarchies \cite{Sa} where such integrable systems are interpreted as a dynamical system on the so-called Sato Grassmannian. On the other hand, their basic algebraic properties and invariants are not well understood. In our work we present results on their homology with trivial coefficients, For this, we need to use several different (co)homology concepts. Some results were obtained by Feigin and Tsygan \cite{FT}  in 1983, but in  their short note the statements and proofs are not precise. In our work, we were able to get straightforward statements and proofs and we also could generalize the results to the coefficients over an associative unital $k$-algebra. 

The structure of the paper is as follows. In Section 2 we introduce some important classes of Lie algebras of general Jacobi matrices and recall their universal central extension. We also give some examples of its subalgebras. Section 3 is devoted to their (co)homology. First we recall the main definitions: Lie algebra homology, Hochschild homology, cyclic homology and (skew)dihedral homology. Then we compute homology with trivial coefficients for the introduced Lie algebras. In this section, we also  present precise proofs of some results in \cite{FT} and give possible generalizations. In Section 4 we introduce two important subalgebras, the orthogonal and symplectic subalgebras. To compute their (co)homology, we need to introduce additional computational methods to the previous one.  In Section 5 we discuss a more general case, where instead of the field $k$ we have an associative unital $k$-algebra, and generalize our (co)homology results for such algebras. Finally, we introduce a rank functional on a Lie subalgebra $\fg J_\infty(k) \subseteq \fg J(k)$ and describe its image. After describing its cohomology ring,  we also raise some open questions.

\section{Lie algebras of generalized Jacobi matrices}\label{sect_2}

\subsection{Lie algebras $\fgl_\infty(k)$ and $\fg J(k)$}

 The first such Lie algebra one may have in mind is the one defined as an inductive limit: let $I$ be a countable set and $I_1 \subsetneq I_2 \subsetneq \cdots \subsetneq I_n \subsetneq \cdots$, $I=\bigcup_n I_n$ an increasing series where each $I_n$ is a finite subset.  The Lie algebra $\fgl_I(k)$ is defined by the inductive limit of $\fgl_{I_m}(k) \hookrightarrow \fgl_{I_n}(k)$ for $m<n$, namely, each element of $\fgl_I(k)$ is a finitely supported matrix of infinite size indexed over $I$, i.e., $X=(x_{i,j})_{i,j \in I}$ such that $\sharp \{(i,j) \in I^2\, \vert \, x_{i,j}\neq 0\, \}<\infty$. In particular, for $I=\Z$, we may denote by $\fgl_\infty(k)$. 
 
 Another one we may also encounter is the Lie algebra of generalized Jacobi matrices defined as follows. A generalized Jacobi matrix is a matrix $M=(m_{i,j})$ indexed over $\Z$ such that there exists a positive integer $N_M$ satisfying
 \[  m_{i,j}=0 \qquad \forall\; i,j \quad \text{such that} \quad \vert i-j\vert>N_M. \]
The set of such matrices has a structure of associative algebra over $k$ denoted by $J(k)$. We shall denote it by $\fg J(k)$ whenever we regard it as Lie algebra. An original Jacobi matrix is a finite size matrix $M=(m_{i,j})$ such that $m_{i,j}=0$ for any $i,j$ with $\vert i-j \vert >1$. 
The Lie algebra $\fgl_\infty(k)$ can be naturally viewed as a subalgebra of $\fg J(k)$. 

\subsection{Universal central extension of the Lie algebra $\fg J(k)$}\label{sect_UCE}
In the course of studying soliton theory, a non-trivial central extension of the Lie algebra $\fg J(k)$ was discovered (see, e.g., \cite{JM} and \cite{DJM}) and it can be described as follows. 

Let $J=\sum_{i\geq 0}E_{i,i} \in \fg J(k)$ be a matrix and let $\Phi: J(k) \rightarrow J(k)$ be the $k$-linear map defined by $X \mapsto JXJ$. It can be checked that, for any $X, Y \in J(k)$, the element $[\Phi(X), \Phi(Y)]-\Phi([X,Y])$ is an element of $\fgl_\infty(k)$, i.e., only finitely many matrix entires can be non-zero. Hence, one can define the $k$-bilinear map
$\Psi: J(k) \times J(k) \rightarrow k$ by
\[ \Psi(X,Y)=\tr([\Phi(X), \Phi(Y)]-\Phi([X,Y])), \]
where $\tr: \fgl_\infty( k) \rightarrow k \, ; \, X=(x_{i,j}) \mapsto \sum_{i \in \Z} x_{i,i}$ is the trace of finitely supported matrices. It turned out that this $\Psi$ is a $2$-cocycle, called \emph{Japanese cocycle}, i.e., 
\begin{enumerate}
\item $\Psi(Y,X)=-\Psi(X,Y)$, 
\item $\Psi([X,Y],Z)+\Psi([Y,Z], X)+\Psi([Z,X],Y)=0$,
\end{enumerate} 
for any $X,Y, Z \in J(k)$. Let $\widetilde{\fg J}(k):=\fg J(k) \oplus k\cdot 1$ be the Lie algebra whose Lie bracket $[ \cdot, \cdot ]'$ is given by
\[ [X,1]'=0, \qquad [X, Y]'=[X,Y]+\Psi(X,Y)1 \qquad X, Y \in \fg J(k). \]
As the Lie algebra $\fg J(k)$ is perfect, i.e., $[\fg J(k), \fg J(k)]=\fg J(k)$, the Lie algebra $\fg J(k)$ admits the universal central extension. 
It was B. L. Feigin and B. L. Tsygan \cite{FT} in 1983 who proved that this central extension $\alpha: \widetilde{\fg J}(k) \rightarrow \fg J(k)$ is \emph{universal,} namely, $\widetilde{\fg J}(k)$ is perfect and for any central extension $\beta: \fa \rightarrow \fg J(k)$, there exists a morphism of Lie algebras $\gamma: \widetilde{\fg J}(k) \rightarrow \fa$ such that the next diagram commutes:
\[ \UseTips
\xymatrix @=1.2pc @R=10pt @C=15pt
{
\widetilde{\fg J}(k) \ar[rr]^{\alpha} \ar[rdd]_{\gamma} && \fg J (k) \\
&& \\
& \fa \ar[ruu]_{\beta} & \\
}\]
\begin{remark} The kernel $\fz$ of the universal central extension of $\fg J(k)$, i.e., the kernel of the canonical projection $\widetilde{\fg J}(k) \twoheadrightarrow \fg J(k)$ can be given by the $2$nd homology $H_2(\fg J(k))$ and the $2$-cocycle $\Psi$ is an element of the $2$nd cohomology $H^2(\fg J(k))$. 
\end{remark}
\begin{remark} Let $\widetilde{G_J}$ be the (pro-)algebraic group of $\widetilde{\fg J}(k)$. The Lie algebra $\widetilde{\fg J}(k)$ acts on a fermionic Fock space $\cF$. The $\widetilde{G_J}$-orbit of its vacuum state $\vac$ in the projective space $\P\cF$  is isomorphic to the Sato Grassmannian. The defining equations of this orbit in terms of Pl\"{u}cker coordinates is nothing but the Hirota bilinear equations of Kadomtsev-Petviashvili (KP) hierarchy. For details, see, e.g., \cite{JM} and \cite{DJM}. 
\end{remark}

\subsection{Some subalgebras of the universal central extension $\widetilde{\fg J}(k)$}
The extended Lie algebra contains several interesting infinite dimensional Lie algebras as subalgebras. We shall introduce some of them. \\

1. For an integer $n>1$, let $\widetilde{\fg J_n}(k)$ be the subalgebra of $\widetilde{\fg J}(k)$ generated by the matrices $M=(m_{i,j})_{i,j \in \Z}$ satisfying $m_{i+n,j+n}=m_{i,j}$ for any $i, j \in \Z$. This subalgebra is isomorphic to the central extension of $\fgl_n(k[t,t^{-1}])$, i.e., the affine Lie algebra 
\[ \widehat{\fgl_n}(k)=\fgl_n(k[t,t^{-1}])\oplus kc \]
whose commutation relation is given by
\[ [e_{i,j}\otimes t^{a}, e_{k,l}\otimes t^{b}]=[e_{i,j}, e_{k,l}]\otimes t^{a+b}+a\delta_{a+b,0}\delta_{j,k}\delta_{i,l}c, \qquad [\widehat{\fgl_n}(k), c]=0, \]
where $e_{i,j} \in \fgl_n$ is the matrix unit whose $(i,j)$-entry is $1$. An isomorphism between $\widetilde{\fgl_n}(k)$ and $\widetilde{\fg J_n}(k)$ is given by
$e_{i,j}\otimes t^{a} \mapsto \sum_{r \in \Z}e_{i+rn, j+(r+a)n}$.  \\

2. Another important example is the one-dimensional central extension of the Lie algebra $k[t,t^{-1}]\left[\dfrac{d}{dt}\right]$ of algebraic differential operators over $k^\ast=k \setminus \{0\}$ that is defined as follows. Set $D=t\frac{d}{dt}$. For any polynomial $f, g \in k[D]$, it can be verified that
\[ [t^rf(D), t^sg(D)]=t^{r+s}(f(D+s)g(D)-f(D)g(D+r)). \]
Let $\Psi$ be the $2$-cocyle on $k[t,t^{-1}]\left[\dfrac{d}{dt}\right]$ defined by
\[ \Psi(t^r f(D), t^sg(D)):=\begin{cases} \sum_{-r\leq j \leq -1}f(j)g(j+r) \qquad &r=-s \geq 0, \\
\quad 0 \quad &r+s\neq 0. \end{cases}
\]
The Lie algebra $\cW_{1+\infty}$ is, by definition, the central extension of $k[t,t^{-1}]\left[\dfrac{d}{dt}\right]$ by the $2$-cocycle $\Psi$, i.e., it is the $k$-vector space 
\[ \cW_{1+\infty} =k[t,t^{-1}]\left[\dfrac{d}{dt}\right] \oplus kC, \]
equipped with the Lie bracket given by
\[ [t^r f(D), t^s g(D)]'=[t^r f(D), t^s g(D)]+\Psi(t^r f(D), t^s g(D))C, \qquad [\cW_{1+\infty}, C]=0.
\]
It can be shown that there exists morphism of Lie algebras $\cW_{1+\infty} \hookrightarrow \widetilde{\fg J}(k)$ satisfying $t^a D^r \, \mapsto \sum_{i \in \Z} i^{a}E_{i+r, i}$. We remark that the Lie subalgebra $k[t,t^{-1}]D \oplus k C$ of $\cW_{1+\infty}$ is isomorphic to the Virasoro algebra. 

The first example shows that the Lie algebra $\widetilde{\fg J}(k)$ contains, at least, affine Lie algebras of classical type, i.e.,  $A_l^{(1)} (l\geq 1), B_l^{(1)} (l\geq 3), C_l^{(1)} (l\geq 2), D_l^{(1)} (l\geq 4), A_{2l}^{(2)} (l\geq 1), A_{2l-1}^{(2)}, (l\geq 3), D_{l+1}^{(2)} (l\geq 2)$ and $D_4^{(3)}$. In addition, the second example shows that Lie algebra $\widetilde{\fg J}(k)$
contains also the Lie algebra $\cW_{1+\infty}$ that plays an important role in the KP-hierarchy. 
% Thus, it is an interesting question to study the Lie algebra $\widetilde{\fg J}(k)$. 

\section{Homology of the Lie algebra $\fg J(k)$}\label{sect_3}
\medskip

In this Section, among others we state the main result of \cite{FT} and explain the outline of the proof. 
Our may goal is the computation of the homology $H_\bullet(\fg J(k))$. 

\subsection{Several homologies}
We briefly recall the definitions of Lie algebra (co)homology, Hochschild homology, cyclic homology and (skew-)dihedral homology. \\

Let $\fg$ be a Lie algebra over a field $k$ of characteristic $0$. From now on, we shall abbreviate the coefficient $k$. The \emph{Lie algebra homology} $H_\bullet(\fg)$ is, by definition, the homology of the complex $(\bigwedge^\bullet \fg, d)$, called the \emph{Eilenberg-Chevalley complex}, where $\bigwedge^\bullet \fg$ is the exterior algebra of $\fg$ and the differential $d$ is given by
\[ d(x_1\wedge \cdots \wedge x_n)=\sum_{1\leq i<j\leq n}(-1)^{i+j+1}[x_i,x_j]\wedge x_1\wedge \cdots \hat{x_i}\wedge \cdots \wedge \hat{x_j} \wedge \cdots \wedge x_n. \]
The Lie algebra cohomology $H^\bullet(\fg)$ is by definition, the cohomology of the `dual of the complex $(\bigwedge^\bullet \fg, d)$' , i.e., $(\Hom_{k}(\bigwedge^\bullet \fg, k),-{}^td)$. 
For some basic properties of this homology, see, e.g. \cite{HS}. 
The homology $H_\bullet(\fg)$ has a commutative and cocommutative DG-Hopf algebra structure. Its coalgebra structure with counit is induced from the comultiplication
\[ \Delta: H_\bullet(\fg) \overset{\delta}{\longrightarrow} H_\bullet(\fg \oplus \fg) \overset{\sigma}{\longrightarrow} H_\bullet(\fg) \otimes H_\bullet(\fg), 
\]
where $\delta: \fg \rightarrow \fg \oplus \fg; \, x \, \mapsto (x,x)$ is the diagonal map, and $\sigma$ is the K\"{u}nneth isomorphism.
Hence, the homology $H_\bullet(\fg)$ is the graded symmetric algebra over its primitive part (cf. \cite{Q}). 
Recall that an element $x \in H_\bullet(\fg)$ is said to be \emph{primitive} if 
its cocommutative coalgebra structure with counit is induced from the comultiplication which
satisfies $\Delta(x)=x\otimes 1+1\otimes x$. 
For the Lie algebra $\fgl_\infty(R)$ over an associative unital $k$-algebra $R$, the primitive part of the homology $H_\bullet(\fgl_\infty(R))$ had been known by Loday and Quillen \cite{LQ} and independently by B. L. Tsygan \cite{T} as follows:
\begin{theorem}\label{thm_LQT}
The primitive part $\prim H_\bullet(\fgl_\infty(R))$ is isomorphic to the cyclic homology $HC_{\bullet-1}(R)$. 
\end{theorem}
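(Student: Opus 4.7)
The plan is to compute $H_\bullet(\fgl_\infty(R))$ by stabilizing from $\fgl_n(R)$ combined with classical invariant theory, and to isolate primitives using the Milnor--Moore theorem. First I would write
\[ H_\bullet(\fgl_\infty(R)) = \varinjlim_n H_\bullet(\fgl_n(R)), \]
and note that in characteristic zero the commutative cocommutative graded Hopf algebra $H_\bullet(\fgl_\infty(R))$ is the graded symmetric algebra on its primitives. Hence it suffices to pin down $\prim H_\bullet(\fgl_\infty(R))$ in the stable range.

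Next, I would exploit the conjugation action of $GL_n$ on $\fgl_n(R) = \fgl_n \otimes R$. Since $GL_n$ is connected, this action is homotopic to the identity on the Chevalley--Eilenberg complex, so
\[ H_\bullet(\fgl_n(R)) \;\simeq\; H_\bullet\!\left( \bigl(\textstyle\bigwedge^\bullet \fgl_n(R)\bigr)^{GL_n}\right). \]
A Schur-functor decomposition of $\bigwedge^m(\fgl_n \otimes R)$ together with taking $GL_n$-invariants on the matrix tensor factor reduces the problem to understanding $(\fgl_n^{\otimes m})^{GL_n}$ with its residual $\Sigma_m$-action twisted by the sign character coming from the exterior structure.

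The third and crucial step invokes the first fundamental theorem of invariant theory for $GL_n$: identifying $\fgl_n \simeq V \otimes V^*$, in the stable range $n \geq m$ the space $(\fgl_n^{\otimes m})^{GL_n}$ is freely spanned by trace monomials
\[ T_\sigma(X_1,\dots,X_m) = \prod_{c \in \mathrm{cycles}(\sigma)} \tr(X_{i_1^c}X_{i_2^c}\cdots), \qquad \sigma \in \Sigma_m. \]
Under the Hopf algebra structure of $H_\bullet$, trace monomials coming from permutations with several cycles decompose as products, while the single cycles generate the primitive part. Taking signed $\Z/m$-coinvariants of $R^{\otimes m}$ along the single-cycle component then yields exactly the degree-$(m-1)$ piece of Connes' cyclic complex $CC_\bullet(R)$.

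The main obstacle is the final compatibility check: one must verify that the Chevalley--Eilenberg differential on $\bigl(\bigwedge^\bullet \fgl_\infty(R)\bigr)^{GL}$ restricts, on the primitive single-cycle component, to Connes' cyclic boundary $b$, with the degree shift $\bullet \mapsto \bullet-1$ reflecting the fact that $\bigwedge^m$ involves $m$ copies of $R$ while $CC_{m-1}(R)$ sits in homological degree $m-1$. Once this identification of differentials is in place, passing to the limit $n \to \infty$ is formal and yields the desired isomorphism $\prim H_\bullet(\fgl_\infty(R)) \simeq HC_{\bullet-1}(R)$.
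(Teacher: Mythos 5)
This statement is the Loday--Quillen--Tsygan theorem, which the paper itself does not prove: it is quoted with references to \cite{LQ} and \cite{T} and used downstream (from Lemma \ref{lemma_stable1} onwards) as a black box. So the relevant comparison is with the original argument of Loday and Quillen, and your sketch is a faithful outline of exactly that argument: stabilization $H_\bullet(\fgl_\infty(R))=\varinjlim_n H_\bullet(\fgl_n(R))$; reduction to the $GL_n$- (equivalently, by reductivity, $\fgl_n(k)$-) invariant subcomplex of the Chevalley--Eilenberg complex; the first fundamental theorem of invariant theory identifying $(\fgl_n^{\otimes m})^{GL_n}$ in the stable range $n\geq m$ with the span of the trace monomials $T_\sigma$, $\sigma\in\Sigma_m$; the observation that the resulting graded Hopf algebra is free graded-commutative on the single-cycle part; and the identification of that part, after taking signed $\Z/m\Z$-coinvariants of $R^{\otimes m}$, with Connes' complex $C^\lambda_{m-1}(R)$, which produces the shift $\bullet\mapsto\bullet-1$. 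Two points deserve more than the one line you give them. First, ``the action is homotopic to the identity'' should be justified by the standard facts that inner derivations act trivially on Lie algebra homology and that $\fgl_n(k)$ is reductive, so that the invariant subcomplex splits off as a direct summand quasi-isomorphic to the whole complex; this is also where one checks that invariants and coinvariants agree. Second, the step you correctly single out as the main obstacle --- verifying that the Chevalley--Eilenberg differential restricted to the single-cycle component is Connes' boundary on $R^{\otimes m}/(1-x)$ --- is the computational core of \cite{LQ} (see also Chapter 10 of \cite{Lod}); as written, your argument is a plan rather than a proof at precisely this point, but the plan is the correct one and all the surrounding structure is sound.
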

The cohomology ring $H^\bullet(\fg)$ is naturally endowed with a commutative and cocommutative DG-Hopf algebra structure. Indeed, by the Poincar\'{e} duality $H^n(\fg) \cong H_n(\fg)^\ast$ (full dual), one has $H^\bullet(\fg) \cong H_\bullet(\fg)^\vee:=\bigoplus_n H_n(\fg)^\ast$ (restricted dual). 

Now we recall some definitions in homology theory of associative algebras. For detail, see, e.g., \cite{Lod}.\\

Let $R$ be an associative unital $k$-algebra and $M$ be an $R$-bimodule. For $n \in \Z_{\geq 0}$, set $C_n(R,M)=M\otimes R^{\otimes n}$. The \emph{Hochschild homology} $H_\bullet(R,M)$ is, by definition, the homology of the complex $(C_\bullet(R,M), b)$, where the the differential $b$ is defined by
\begin{align*}
&b(m\otimes r_1\otimes \cdots \otimes r_n) \\
=
&ma_1\otimes a_2\otimes \cdots \otimes a_n+\sum_{i=1}^{n-1}(-1)^{i}m \otimes a_1 \otimes \cdots \otimes a_ia_{i+1}\otimes \cdots \otimes a_n \\
&+(-1)^{n}a_nm\otimes a_1\otimes \cdots \otimes a_{n-1}.
\end{align*}
In particular, for $M=R$, for each $n \in \Z_{>0}$, there is an action of the cyclic group $\Z/(n+1)\Z$ given by
\[ x.(r_0\otimes r_1\otimes \cdots \otimes r_n)=(-1)^n(r_n\otimes r_0 \otimes \cdots \otimes r_{n-1}), \]
where $x$ is a generator of the group $\Z/(n+1)\Z$. The differential $b$ of the complex $(C_\bullet(R,R), b)$ induces a differential on the complex $C_n^\lambda(R):=R^{\otimes (n+1)}/(1-x)$.  The homology of this complex, called \emph{Connes' complex} is the so-called \emph{cyclic homology} $HC_\bullet(R)$ of $R$. For some cases, this cyclic homology can be computed with the aid of 
\emph{Connes' periodicity exact sequence}:
\begin{equation}\label{Connes} 
\cdots \rightarrow HH_n(R) \rightarrow HC_n(R) \rightarrow HC_{n-2}(R) \rightarrow HH_{n-1}(R) \rightarrow \cdots.
\end{equation}

Now, assume that $R$ is equipped with a $k$-linear anti-involution $\bar{\cdot}: R \rightarrow R$. 
One can extend the action of the group $\Z/(n+1)\Z$ on $R^{\otimes (n+1)}$ to the dihedral group $D_{n+1}=\langle x,y \vert x^{n+1}=y^2=1, yxy=x^{-1}\rangle$ by
\[ y.(r_0\otimes r_1\otimes \cdots \otimes r_n)=(-1)^{\frac{1}{2}n(n+1)}(\overline{r_0}\otimes \overline{r_n}\otimes \overline{r_{n-1}}\otimes \cdots \otimes \overline{r_1}). \]
Let $\bD_n(R)$ denote the space of coinvariants $(R^{\otimes n+1})_{D_{n+1}}$. If we modify the action of $y$ by $-y$, the resulting coinvariants will be denoted by ${}_{-1}\bD_n(R)$. The differential $b$ on $C_\bullet(R,R)$ induces the differentials on $(R^{\otimes n+1})_{D_{n+1}}$ and ${}_{-1}\bD_n(R)$, denoted by $\bar{b}$. Their homologies are called \emph{dihedral }(resp. \emph{skew-dihedral}) \emph{homology} of $R$:
\[ HD_n(R):=H_n(\bD_\bullet(R),\bar{b}), \qquad 
  (\; \text{resp}.\; {}_{-1}HD_n(R):=H_n({}_{-1}\bD_\bullet(R),\bar{b})\; ).
\]
For more informations, see, e.g., \cite{Lod}. 
\subsection{An isomorphism $\fg J(k) \cong \fgl_n(J(k))$}
Let $n>1$ be an integer. We fix a section of $\Z \twoheadrightarrow \Z/n\Z$ and denote its image by $I$. For any matrix $M=(m_{i,j})_{i,j \in \Z}$ and any $i,j \in I$, we set $M_{i,j}:=(m_{i+nk, j+nl})_{k,l \in \Z}$. Then the $k$-linear map 
\[ \Phi_I: \fg J(k) \longrightarrow \fgl_n(J(k));  \qquad M \; \longmapsto \; (M_{i,j})_{i,j \in I}, \]
is an isomorphism of Lie algebras $\fg J(k) \longrightarrow \fgl_n(J(k))$, and it induces an isomorphism of homologies $H_\bullet(\fg J(k)) \cong H_\bullet(\fgl_n(J(k)))$. Taking an inductive limit, we obtain

\begin{lemma}[Lemma 1 of \cite{FT}]\label{lemma_stable1} $H_\bullet(\fg J(k)) \cong H_\bullet(\fgl_\infty(J(k)))$.
\end{lemma}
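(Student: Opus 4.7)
The plan is to apply the isomorphism $\Phi_I^{(n)}\colon \fg J(k) \xrightarrow{\cong} \fgl_n(J(k))$ constructed just above, for every $n$, and to pass to the inductive limit as $n\to\infty$.

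Since $\fgl_\infty(J(k)) = \varinjlim_n \fgl_n(J(k))$ along the standard block inclusions (adjoining a zero row and column), and the Eilenberg--Chevalley complex commutes with filtered colimits, one has
\[ H_\bullet(\fgl_\infty(J(k))) \cong \varinjlim_n H_\bullet(\fgl_n(J(k))). \]
Each term on the right is identified, via $\Phi^{(n)}_{\ast}$, with $H_\bullet(\fg J(k))$, so the remaining task is to show that the colimit collapses to this common value.

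To that end, I would choose the sections $I_n$ coherently along the cofinal subsequence $n=2^k$, exploiting the self-similarity $\fg J(k) \cong \fgl_2(\fg J(k)) \cong \fgl_2(\fgl_2(\fg J(k))) \cong \cdots$ obtained by iterating $\Phi^{(2)}$. Under these identifications the transition maps of the colimit system pull back to explicit Lie algebra endomorphisms $\iota_k\colon \fg J(k)\to \fg J(k)$, essentially a ``stretching'' of matrix indices by a factor of two. The crucial step is to verify that each $H_\bullet(\iota_k)$ is an isomorphism on $H_\bullet(\fg J(k))$; once this is in hand, the colimit stabilizes and one obtains the stated isomorphism.

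The main obstacle lies precisely in this last verification. The natural strategy is to exhibit, inside $\fgl_\infty(J(k))$, an element conjugation by which turns the block inclusion $\fgl_n(J(k)) \hookrightarrow \fgl_{n+1}(J(k))$ into an embedding that, after pullback through $\Phi^{(n)}$ and $\Phi^{(n+1)}$, becomes an inner automorphism of $\fg J(k)$; since Lie algebra homology is invariant under inner automorphisms, this forces $H_\bullet(\iota_k)$ to be the identity. A robust alternative I would pursue in parallel is to invoke the homological stability of $\fgl_n$ over an associative unital $k$-algebra (in the style of the arguments underlying Theorem~\ref{thm_LQT}): that result guarantees $H_q(\fgl_n(J(k))) \to H_q(\fgl_\infty(J(k)))$ to be an isomorphism once $n$ is large relative to $q$, and combined with the $n$-independent identification $H_q(\fgl_n(J(k))) \cong H_q(\fg J(k))$ it forces $H_q(\fgl_\infty(J(k))) \cong H_q(\fg J(k))$ in every degree.
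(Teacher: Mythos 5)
Your proposal follows the paper's own route: the paper likewise constructs the isomorphisms $\Phi_I\colon \fg J(k)\xrightarrow{\;\sim\;}\fgl_n(J(k))$ for each $n$ and then simply ``takes an inductive limit,'' so your identification of the stabilization of the colimit as the real content of that step, and your appeal to Loday--Quillen homological stability (an isomorphism $H_q(\fgl_n(R))\to H_q(\fgl(R))$ for $n\geq q$, valid for any unital associative $k$-algebra $R$ in characteristic zero) to settle it, is a correct completion of exactly the argument the paper sketches. One caveat: your first stabilization strategy cannot work as literally stated, because the block inclusion $\fgl_n(J(k))\hookrightarrow\fgl_{n+1}(J(k))$ pulled back through $\Phi^{(n)}$ and $\Phi^{(n+1)}$ is an injective but non-surjective endomorphism of $\fg J(k)$ and hence is never an inner automorphism; only the homological-stability route (or a separate argument that this non-surjective endomorphism still induces an isomorphism on homology) closes the gap.
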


Hence, the homology $H_\bullet(\fg J(k))$ is the commutative and cocommutative DG-Hopf algebra whose primitive part is $HC_{\bullet-1}(J(k))$ by Theorem \ref{thm_LQT}. Thus, if we can compute the Hochschild homology $HH_\bullet(J(k))$ of the associative algebra $J(k)$, Connes' periodicity exact sequence \eqref{Connes} allows us to determine the cyclic homology $HC_{\bullet}(J(k))$.

 In the rest of this section, we will explain this briefly.  

\subsection{Twisted Group Algebras}
Let $A$ be an associative unital $k$-algebra and let $G$ be a discrete subgroup of the group of $k$-automorphisms of $A$. One can twist the natural product structure on $A \otimes_k k[G]$ by $(a \otimes [g])\cdot (b \otimes [h]):=ag(b)\otimes [gh]$, where $a, b\in A$ and $g, h \in G$. The tensor product $A \otimes_k k[G]$ equipped with such a twisted product is called \emph{twisted group algebra} and will be denoted by $A\{G\}$. 

\begin{example}

\begin{enumerate}
\item Fix $n \in \Z_{>1}$. Let $A=\overbrace{k\times k\times \cdots \times k}^n$ be the $n$-copies of $k$ viewed as commutative associative algebra. The group $G=\Z/n\Z$ acts on A via cyclic permutation: $(i+n\Z).(a_1,\ldots, a_n)=(a_{i+1}, \ldots, a_{i+n})$ where the indicies should be understood modulo $n$. Then, the twisted group algebra $A\{G\}$ is isomorphic to the algebra of $n\times n$ matrices $M_n(k)$. The isomorphism $A\{G\} \rightarrow M_n(k)$ is given by
\[ (a_1,\ldots, a_n) \otimes [i+ n\Z] \quad \longmapsto \quad \sum_{k=1}^n a_ke_{k, k+i}. \]
\item $A=\prod_{i \in \Z}ke_i$, where $e_i$'s are orthogonal idempotents and $G=\Z$. 
The additive group $\Z$ acts on $A$: $1.e_i=e_{i-1}.$ One can show that the twisted group algebra $A\{G\}$ is isomorphic to the associative algebra $J(k)$ where the isomorphism $A\{G\} \rightarrow J(k)$ is given by
\[ \sum_{k \in \Z} a_ke_k \otimes [i] \quad \longmapsto \quad \sum_{k \in \Z}a_kE_{k,k+i}.
\]
\end{enumerate}
\end{example}
\subsection{Hochschild-Serre type Spectral Sequence}
Let $M$ be an $A\{G\}$-bimodule. 
\begin{theorem} \cite{St} There exists a spectral sequence
$$
 E_{p,q}^2=H_p(k[G], H_q(A, M)) \; \Longrightarrow \; H_{p+q}(A\{G\}, M)$$
 \end{theorem}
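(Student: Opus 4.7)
The plan is to realize Stefan's spectral sequence as the Grothendieck spectral sequence for the composition of two right-exact functors on the category of $A\{G\}$-bimodules. Since the inclusions $A \hookrightarrow A\{G\}$ and $k[G] \hookrightarrow A\{G\}$ jointly generate $A\{G\}$, an $A\{G\}$-bimodule is equivalently an $A$-bimodule equipped with a compatible $G$-action by conjugation. For any $A\{G\}$-bimodule $N$, the Hochschild homology $H_q(A,N)$ then inherits a canonical $k[G]$-module structure via the conjugation chain map
\[ n \otimes a_1 \otimes \cdots \otimes a_q \;\longmapsto\; [g]\,n\,[g]^{-1} \otimes g(a_1) \otimes \cdots \otimes g(a_q), \]
which is a chain map for each $g \in G$ and is well defined (independent of representative) on passage to homology.

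First I would verify that at the level of zeroth homology the two functors compose correctly: for any $A\{G\}$-bimodule $M$,
\[ H_0(k[G],\, H_0(A, M)) \;=\; M/([A,M] + [k[G],M]) \;=\; M/[A\{G\},M] \;=\; H_0(A\{G\}, M), \]
using that $A$ and $k[G]$ generate $A\{G\}$ as an algebra. To extend this to higher degrees, I would invoke the Grothendieck spectral sequence, for which two conditions must be checked. Condition (i): the functor $H_\bullet(A,-)$ on $A\{G\}$-bimodules agrees with the left derived functor of $H_0(A,-)$. This follows because $A\{G\}$ is a free $A$-module on both sides, with basis $\{[g]\}_{g \in G}$, so projective $A\{G\}$-bimodules restrict to projective $A^e$-modules and the usual Tor interpretation survives.

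The main obstacle is condition (ii), the acyclicity hypothesis for the Grothendieck spectral sequence: if $P$ is a projective $A\{G\}$-bimodule then $H_0(A, P)$ must be acyclic as a $k[G]$-module. It suffices to check this on the canonical generator $P = A\{G\} \otimes_k A\{G\}$. Using the decomposition $A\{G\} = \bigoplus_g A[g] = \bigoplus_g [g]A$, one computes $H_0(A, P) = P/[A, P]$ and identifies it, via the conjugation action, as a $k[G]$-module induced from the trivial subgroup (i.e.\ free over $k[G]$), hence acyclic for group homology. Once (ii) is established, the Grothendieck spectral sequence immediately delivers
\[ E_{p,q}^2 \;=\; H_p\bigl(k[G],\, H_q(A, M)\bigr) \;\Longrightarrow\; H_{p+q}(A\{G\}, M). \]
An alternative route would construct an explicit bi-resolution $P_{\bullet,\bullet} \to A\{G\}$ of $A\{G\}$-bimodules by braiding the bar resolutions of $A$ and of $k$ over $k[G]$, and read off the spectral sequence of the associated double complex; the required free-module identification of the previous paragraph is exactly what makes this braiding work.
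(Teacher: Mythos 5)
The paper does not actually prove this theorem: it is quoted verbatim from Stefan's article \cite{St}, which establishes it in the more general setting of Hopf--Galois extensions (a twisted group algebra $A\{G\}$ being a $k[G]$-Galois extension of $A$). Your proposal is a correct and essentially self-contained proof specialized to the crossed-product case, and it follows the same composite-functor strategy that underlies Stefan's argument: realize the spectral sequence as the Grothendieck spectral sequence for $(-)_G \circ H_0(A,-)$ on $A\{G\}$-bimodules. Your verifications are the right ones — the degree-zero identification $M/([A,M]+[k[G],M]) = M/[A\{G\},M]$, the freeness of $A\{G\}^e$ over $A^e$ for condition (i), and the computation that $H_0(A, A\{G\}\otimes_k A\{G\}) \cong A\{G\}\otimes_A A\{G\}$ decomposes into components indexed by $G\times G$ on which conjugation acts freely, hence is induced from the trivial subgroup and $H_{>0}(G,-)$-acyclic. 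The one step you assert rather than prove, and which deserves the most care, is the identification of the derived-functor $k[G]$-module structure on $L_q H_0(A,-)$ with the explicit conjugation action $n\otimes a_1\otimes\cdots\otimes a_q \mapsto [g]n[g]^{-1}\otimes g(a_1)\otimes\cdots\otimes g(a_q)$ on the Hochschild complex; this requires a $G$-equivariant (up to homotopy) comparison of resolutions, and it is precisely the point where, as the paper remarks, Feigin and Tsygan's original description of the $k[G]$-module structure on $H_q(A, A\{G\})$ was incorrect and had to be rectified by Stefan. Spelling out that comparison would complete your argument; as it stands the proposal is a sound outline that buys a direct, elementary proof for smash products without invoking the full Hopf--Galois machinery.
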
 
 
\begin{remark} In \cite{FT}, B. Feigin and B. Tsygan treated the above case when $M=A\{G\}$. But their description of the $k[G]$-module structure on $H_q(A,A\{G\})$ was not correct and this was rectified by D. Stefan \cite{St} under more general setting. 
\end{remark}
In our case, $G=\Z$ and $k[\Z]$ is a principal ideal domain, hence its global dimension is at most $1$ which implies that this spectral sequence collapses at $E^2$. \\

By a standard argument, one can show that
\[ H_q(A, J(k)))\cong \prod_{i \in \Z}HH_q (k)e_i. \]
Moreover, by direct computation, it can be verified that
\[ H_p(k[\Z], H_q(A, J(k))) \cong \begin{cases}
HH_q(k) \qquad & p=1, \\
\quad 0 \quad & p\neq 1. \end{cases}
\]
This implies $HH_p(J(k))\cong HH_{p-1}(k).$
\begin{remark}
The isomorphism $H_1(k[\Z], H_p(A,J(k))\cong HH_{p+1}(J(k))$ is given by the so-called \emph{shuffle product} \cite{Q}. This fact plays an important role when we determined the homology of the Lie algebras of orthogonal and symplectic generalized Jacobi matrices in \cite{FI2}. 
\end{remark}
By definition, it follows that 
\[ HH_p(k)= \begin{cases} \quad k \quad & p=0, \\ \quad 0 \quad & p>0. \end{cases}
\]
Thus, we obtain
\begin{theorem}[cf. Theorem 3 in \cite{FT}] $HH_1(J(k))=k$ and $HH_p(J(k))=0$ for any $p\neq 1$.
\end{theorem}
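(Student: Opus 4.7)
The plan is to read off the theorem as an immediate consequence of assembling the pieces the excerpt has already set up: the twisted-group-algebra identification $J(k)\cong A\{\Z\}$ with $A=\prod_{i\in\Z}ke_i$, Stefan's spectral sequence, and the computations of its $E^2$-page just displayed. First I would apply Stefan's spectral sequence with $M=J(k)=A\{\Z\}$ to get
$$E^2_{p,q}=H_p(k[\Z],H_q(A,J(k)))\Longrightarrow HH_{p+q}(J(k)).$$
Since $k[\Z]$ is a principal ideal domain of global dimension $1$, the columns $E^2_{p,\ast}$ vanish for $p\geq 2$. Hence the spectral sequence degenerates at $E^2$, and it suffices to determine the two nontrivial columns $p=0,1$.

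Next I would substitute the $E^2$-computations recalled in the excerpt. The excerpt states $H_q(A,J(k))\cong\prod_{i\in\Z}HH_q(k)e_i$, which collapses to $\prod_{i\in\Z}ke_i$ for $q=0$ and vanishes for $q>0$ because $HH_q(k)=0$ in positive degrees. Consequently $E^2_{p,q}=0$ whenever $q>0$, and the whole abutment is concentrated in total degree equal to $p$ for the unique nonvanishing row $q=0$.

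It remains to evaluate $E^2_{p,0}=H_p(k[\Z],\prod_{i\in\Z}ke_i)$. Here I would use the length-two free resolution $0\to k[\Z]\xrightarrow{g-1}k[\Z]\to k\to 0$, with generator $g\in\Z$ acting by $g\cdot e_i=e_{i-1}$. Tensoring with $M=\prod_{i\in\Z}ke_i$ reduces the problem to computing the cokernel and kernel of the operator $(a_i)_{i\in\Z}\mapsto(a_{i+1}-a_i)_{i\in\Z}$ on $M$. This map is surjective (solve the telescoping recursion $a_{n+1}=a_n+b_n$ starting from any $a_0$), so $M_{\Z}=0$ and therefore $E^2_{0,0}=0$. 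Its kernel consists of the constant sequences, i.e.\ scalar multiples of the identity matrix $I$, giving $M^{\Z}\cong k$ and $E^2_{1,0}=k$. Combining: the only nontrivial $E^\infty$ entry is $E^\infty_{1,0}=k$, whence $HH_1(J(k))=k$ and $HH_p(J(k))=0$ for $p\neq 1$.

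The only genuinely delicate point in this plan is the bimodule computation $H_q(A,J(k))\cong\prod_{i\in\Z}HH_q(k)e_i$, together with the $k[\Z]$-action one must put on it in order to feed it correctly into Stefan's spectral sequence; this is precisely where the original Feigin--Tsygan note was inaccurate, as the remark after Theorem~3.4 of the excerpt points out. Once the infinite product (rather than direct sum) is handled correctly and the shift action is identified, everything else is a one-line group-homology computation for $\Z$ and the $E^2$-collapse finishes the argument.
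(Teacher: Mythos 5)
Your proposal follows the paper's proof exactly: the identification $J(k)\cong A\{\Z\}$, Stefan's spectral sequence with its collapse at $E^2$ because $k[\Z]$ has global dimension one, the computation $H_q(A,J(k))\cong\prod_{i\in\Z}HH_q(k)e_i$, and the group-homology evaluation yielding $HH_p(J(k))\cong HH_{p-1}(k)$. The only difference is that you spell out the ``direct computation'' of $H_\bullet(k[\Z],\prod_{i\in\Z}ke_i)$ via the resolution $0\to k[\Z]\xrightarrow{g-1}k[\Z]\to k\to 0$, which the paper leaves implicit, and you do so correctly (surjectivity of the difference operator on the full product giving $H_0=0$, constant sequences giving $H_1=k$).
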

By Connes' Periodicity long exact sequence \eqref{Connes}, one has
\begin{corollary} \label{cor_HC-Jacobi}
$HC_p(J(k))=k$ for odd $p$ and $HC_p(J(k))=0$ for even $p$.
\end{corollary}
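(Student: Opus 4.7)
The plan is to apply Connes' periodicity long exact sequence \eqref{Connes} to the associative algebra $R=J(k)$, using as input the preceding theorem: $HH_1(J(k))=k$ and $HH_p(J(k))=0$ for all $p\neq 1$. Because only a single Hochschild group is nonzero, the periodicity operator $S\colon HC_n\to HC_{n-2}$ should be an isomorphism in most degrees, propagating cyclic homology $2$-periodically from explicit values in low degree. Throughout I adopt the usual convention $HH_p=HC_p=0$ for $p<0$.

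In low degree I would read off the cyclic homology by inspecting short portions of \eqref{Connes}. The segment $HH_0(J(k))\to HC_0(J(k))\to HC_{-2}(J(k))$, with both outer terms vanishing, gives $HC_0(J(k))=0$. Next, the segment
\[
HC_0(J(k)) \to HH_1(J(k)) \to HC_1(J(k)) \to HC_{-1}(J(k))
\]
reads $0\to k \to HC_1(J(k)) \to 0$, so $HC_1(J(k))\cong k$. For degree two, the segment
\[
HH_2(J(k)) \to HC_2(J(k)) \overset{S}{\to} HC_0(J(k)) \to HH_1(J(k))
\]
is $0 \to HC_2(J(k)) \to 0 \to k$, whence $HC_2(J(k))=0$. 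For $n\geq 3$, both $HH_n(J(k))$ and $HH_{n-1}(J(k))$ vanish, so the four-term segment
\[
HH_n(J(k)) \to HC_n(J(k)) \overset{S}{\to} HC_{n-2}(J(k)) \to HH_{n-1}(J(k))
\]
forces $S$ to be an isomorphism $HC_n(J(k))\cong HC_{n-2}(J(k))$. Induction from $HC_1(J(k))=k$ and $HC_2(J(k))=0$ then yields $HC_n(J(k))=k$ for odd $n$ and $HC_n(J(k))=0$ for even $n$.

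The argument is a routine diagram chase in Connes' SBI sequence, so I do not anticipate any genuine obstacle. The only mildly delicate point is degree $2$, where the non-vanishing of $HH_1$ prevents one from concluding $HC_2=0$ by pure periodicity; one must instead combine the previously-established vanishing of $HC_0$ with exactness at $HC_2$, which is why the base case must be handled separately from the inductive step.
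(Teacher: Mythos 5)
Your proposal is correct and follows exactly the paper's route: the corollary is deduced from the theorem $HH_1(J(k))=k$, $HH_p(J(k))=0$ ($p\neq 1$) by a diagram chase in Connes' periodicity exact sequence \eqref{Connes}, which the paper invokes without spelling out the details you supply. Your careful handling of the degree-$2$ base case (using $HC_0=HH_0=0$ rather than pure periodicity) is the right way to make the paper's one-line argument rigorous.
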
 

\subsection{Description of the primitive part $\prim(H_\bullet(\fg J(k)))$}
Theorem \ref{thm_LQT} and Corollary \ref{cor_HC-Jacobi} imply 
\begin{theorem}\label{thm_main1} $\prim(H_\bullet(\fg J(k)))$ is the graded $k$-vector space whose $p$th graded component is
\[ \prim(H_\bullet(\fg J(k)))_p=\begin{cases} \, k \, \quad & p \equiv 0 (2), \\ \, 0\, & \text{otherwise}. \end{cases}
\]
\end{theorem}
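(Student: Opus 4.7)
The approach is to chain together three results already established in this section. First, I would use Lemma \ref{lemma_stable1} to pass from $\fg J(k)$ to $\fgl_\infty(J(k))$ at the level of homology. The key point to verify is that this passage respects the Hopf-algebra structure, and hence the notion of primitivity: this is automatic, since both the isomorphism $\Phi_I: \fg J(k) \xrightarrow{\sim} \fgl_n(J(k))$ and the transition maps $\fgl_n(J(k)) \hookrightarrow \fgl_{2n}(J(k)) \hookrightarrow \cdots$ are morphisms of Lie algebras, and the comultiplication on $H_\bullet(\fg)$ is functorially induced from the diagonal $\fg \to \fg \oplus \fg$ together with the K\"{u}nneth isomorphism. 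Consequently, $\prim H_\bullet(\fg J(k)) \cong \prim H_\bullet(\fgl_\infty(J(k)))$ as graded $k$-vector spaces.

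Second, I would invoke the Loday--Quillen--Tsygan theorem (Theorem \ref{thm_LQT}) with the associative algebra $R=J(k)$, yielding
\[
\prim H_\bullet(\fgl_\infty(J(k))) \cong HC_{\bullet-1}(J(k)).
\]
Third and last, I would substitute the explicit value of $HC_\bullet(J(k))$ furnished by Corollary \ref{cor_HC-Jacobi}: in degree $p$, the right-hand side equals $k$ precisely when $p-1$ is odd, i.e.\ when $p$ is even, and vanishes otherwise. Composing the three isomorphisms gives $\prim H_p(\fg J(k)) \cong k$ for $p$ even and $0$ for $p$ odd, which is exactly the claim.

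There is no substantive obstacle to overcome, because the whole argument is the composition of previously established isomorphisms; the only conceptual check is the Hopf-compatibility of the stable isomorphism in Lemma \ref{lemma_stable1}, handled in the first step via functoriality of Lie-algebra homology as a DG-Hopf-algebra valued functor.
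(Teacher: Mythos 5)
Your proposal is correct and follows exactly the paper's own route: combine Lemma \ref{lemma_stable1} with the Loday--Quillen--Tsygan theorem (Theorem \ref{thm_LQT}) to identify $\prim H_\bullet(\fg J(k))$ with $HC_{\bullet-1}(J(k))$, then substitute the values from Corollary \ref{cor_HC-Jacobi}. Your extra remark on the Hopf-compatibility of the stable isomorphism is a reasonable point that the paper leaves implicit.
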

In particular, each graded subspace of $H_\bullet(\fg J(k))$ is of finite dimension. 
Thus, by the Poincar\'{e} duality
$H^p(\fg J(k)) \cong H_p(\fg J(k))^\ast $,  one obtains Theorem 1. a) in \cite{FT}, i.e.,  the cohomology ring $H^\bullet(\fg J(k))$ has the next description:
\begin{theorem}
There exists primitive elements $c_i \in H^{2i}(\fg J(k))$ such that $H^\bullet(\fg J(k))$ is isomorphic to the Hopf algebra $S(\bigoplus_{i \in \Z_{>0}} kc_i)$.
\end{theorem}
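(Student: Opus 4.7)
\emph{Proof plan.} The strategy is to dualize the preceding homology theorem. By the structure theorem recalled in Section 3.1 (following \cite{Q}), the graded commutative and cocommutative Hopf algebra $H_\bullet(\fg J(k))$ is canonically isomorphic to the graded symmetric algebra on its primitive part. Combining this with Theorem \ref{thm_main1}, which asserts that $\prim H_\bullet(\fg J(k))$ is one-dimensional in each positive even degree and zero otherwise, gives
\[ H_\bullet(\fg J(k)) \cong S\Bigl(\bigoplus_{i \in \Z_{>0}} k\pi_i\Bigr), \qquad \pi_i \in H_{2i}(\fg J(k)). \]
All generators lie in strictly positive even degrees, so each homogeneous component of this polynomial algebra is finite-dimensional.

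Next, I would apply the Poincar\'{e} duality $H^n(\fg J(k)) \cong H_n(\fg J(k))^\ast$ noted in Section 3.1. The finite-type property just established means that the full dual coincides with the graded restricted dual, so $H^\bullet(\fg J(k))$ acquires a graded commutative and cocommutative Hopf algebra structure by interchanging multiplication and comultiplication. Since the base field has characteristic zero and the generators lie in positive even degrees, the graded dual of the symmetric algebra $S\bigl(\bigoplus_i k\pi_i\bigr)$ is again a symmetric algebra, namely $S\bigl(\bigoplus_i k\pi_i^\ast\bigr)$: the would-be divided power structure on the dual collapses to the polynomial one in characteristic zero. Setting $c_i \in H^{2i}(\fg J(k))$ to be the dual basis element to $\pi_i$ yields the desired primitive generators.

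The only subtle step is the identification of the primitive elements of the dual Hopf algebra with the duals of the primitives (equivalently, the identification of the dual of $S(V)$ with $S(V^\ast)$ as Hopf algebras). This rests on both the characteristic-zero hypothesis and the finite-type condition in each degree; fortunately both hold here, the latter being guaranteed by Theorem \ref{thm_main1}. Once this identification is in place, the freeness and primitivity of the classes $c_i$ together produce the claimed Hopf algebra isomorphism $H^\bullet(\fg J(k)) \cong S\bigl(\bigoplus_{i \in \Z_{>0}} kc_i\bigr)$.
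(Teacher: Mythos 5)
Your proposal is correct and follows essentially the same route as the paper: invoke the structure theorem identifying $H_\bullet(\fg J(k))$ with the symmetric algebra on its primitive part, use Theorem \ref{thm_main1} to see the primitives are one-dimensional in each positive even degree (hence each graded component is finite-dimensional), and then dualize via the Poincar\'{e} duality $H^p(\fg J(k)) \cong H_p(\fg J(k))^\ast$. Your added remark that the graded dual of $S(V)$ is again a symmetric algebra in characteristic zero is a correct and slightly more explicit treatment of the dualization step that the paper leaves implicit.
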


\section{Orthogonal and Symplectic Subalgebras of $\fg J(k)$}\label{sect_4}

In this Section, after recalling the definition of orthogonal and symplectic subalgebras of $\fg J(k)$, we present two key steps to compute their  homology of these subalgebras and state the results about their homology.

\subsection{Definitions}
Let $R$ be an associative unital $k$-algebra equipped with an $k$-linear anti-involution $\bar{\cdot}$, i.e., $\bar{\bar{s}}=s$ and $\overline{st}=\bar{t}\bar{s}$ for $s,t \in R$. We extend the \emph{transpose}, also denoted by ${}^t(\cdot )$, to $J(R)$ by ${}^t(E_{i,j}(r))=E_{j,i}(\bar{r})$, where $E_{i,j}(r)=E_{i,j}r \in J(R)$. 

For $l \in \Z$, let $\tau_l, \tau_l^s$ be the $k$-linear anti-involutions of the Lie algebra $\fg J(k)$ defined by
\begin{align*}
&\tau_l(X)=(-1)^{l}J_l ({}^tX)J_l, \qquad J_l:=\sum_{i \in \Z}(-1)^{i}E_{i,l-i}, \\
&\tau_l^s(X)=J_l^s({}^tX)J_l^s, \qquad J_l^s:=\sum_{l \in \Z}E_{i,l-i}.
\end{align*}
We set
\begin{align*}
\fo_J^{\odd}(k)=
&\fg J(k)^{\tau_0^s, -}=\{X \in \fg J(k) \vert \tau_0^s(X)=-X\},  \\
\fsp_J(k)=
&\fg J(k)^{\tau_{-1}, -}=\{X \in \fg J(k) \vert \tau_{-1}(X)=-X\}, \\
\fo_J^{\even}(k)=
&\fg J(k)^{\tau_{-1}^s, -}=\{X \in \fg J(k) \vert \tau_{-1}^s(X)=-X\}.
\end{align*}
\begin{remark}
The universal central extension of the Lie algebras $\fg J(k), \fo_J^{\odd}(k), \fsp_J(k)$ and $\fo_J^{\even}(k)$ are the Lie algebras of type $A_J, B_J, C_J$ and $D_J$ respectively that are used to obtain the Hirota bilinear forms with these symmetries. See, e.g., \cite{JM}, for details. 
\end{remark}
\subsection{Stable limit of Orthogonal and Symplectic Lie algebras and homology}
Set
\[ J_B=\sum_{i \in \Z}e_{i,-i}, \qquad J_C=\sum_{i \in \Z}(-1)^{i}e_{i,-i-1}, 
\qquad J_D=\sum_{i \in \Z}e_{i,-i-1}. \]
We define the anti-involutions $\tau_B, \tau_C$ and $\tau_D$ of $\fgl_\infty(R)$ as follows:
\[ \tau_B(X)=J_B({}^tX)J_B, \qquad \tau_C(X)=-J_C({}^tX)J_C, \qquad
\tau_D(X)=J_D({}^tX)J_D. \]
The Lie algebras
\[ \fo_{\odd}(R):=\fgl_\infty(R)^{\tau_B, -}, \qquad \fsp(R):=\fgl_\infty(R)^{\tau_C, -}, 
\qquad \fo_{\even}(R):=\fgl_\infty(R)^{\tau_D, -}
\]
are clearly the stable limit of the Lie algebras $\{\fo_{2l+1}(R)\}_{l \in \Z_{>0}}, \{\fsp_{2l}(R)\}, \{\fo_{2l}(R)\}_{l \in \Z_{>1}}$, respectively. In a way similar to the case $\fg J(k)$ (cf. Lemma \ref{lemma_stable1}), one can show 
\begin{lemma}\label{lemma_stable2} There exists isomorphisms of homologies:
\begin{enumerate}
\item $H_\bullet(\fo^{\odd}_J(k)) \cong H_\bullet(\fo_{\odd}(J(k)))$, 
\item $H_\bullet(\fsp_J(k)) \cong H_\bullet(\fsp(J(k)))$, 
\item $H_\bullet(\fo^{\even}_J(k)) \cong H_\bullet(\fo_{\even}(J(k)))$, 
\end{enumerate}
\end{lemma}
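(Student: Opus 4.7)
The plan is to mimic the proof of Lemma \ref{lemma_stable1}, replacing the isomorphism $\Phi_I$ by a version that additionally respects the anti-involution. For each of the three cases I would choose a symmetric section $I_l \subset \Z$ of the projection $\Z \twoheadrightarrow \Z/n\Z$: for $\fo_J^{\odd}(k)$ take $n = 2l+1$ and $I_l = \{-l, \ldots, l\}$, which is invariant under $i \mapsto -i$; for $\fsp_J(k)$ and $\fo_J^{\even}(k)$ take $n = 2l$ and $I_l = \{-l, \ldots, l-1\}$, invariant under $i \mapsto -1-i$. The block-decomposition isomorphism $\Phi_{I_l} \colon \fg J(k) \overset{\sim}{\to} \fgl_n(J(k))$ is then the same map as before, now aligned with the relevant symmetry.

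The key computation is to verify that $\Phi_{I_l}$ intertwines the ambient anti-involution on $\fg J(k)$ with the standard one of matching type on $\fgl_n(J(k))$ built from $J_B, J_C, J_D$, applied to $J(k)$-valued blocks with $J(k)$ itself equipped with its natural transposition (which is an anti-involution). This reduces to a direct block-index computation: the transpose of a $\Z\times\Z$ matrix corresponds under $\Phi_{I_l}$ to transposition of both block indices and block entries, while the anti-diagonals $J_0^s, J_{-1}, J_{-1}^s$ decompose as the desired anti-diagonal in the outer block indices tensored with an analogous anti-diagonal (that is, $J_B$, $J_C$, or $J_D$ for $J(k)$) in the inner entries. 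Once this identification is made, $\Phi_{I_l}$ restricts to Lie algebra isomorphisms $\fo_J^{\odd}(k) \overset{\sim}{\to} \fo_{2l+1}(J(k))$, $\fsp_J(k) \overset{\sim}{\to} \fsp_{2l}(J(k))$, and $\fo_J^{\even}(k) \overset{\sim}{\to} \fo_{2l}(J(k))$, inducing isomorphisms on homology for each $l$. Passage to the inductive limit, exactly as at the end of the proof of Lemma \ref{lemma_stable1}, then yields the three claimed isomorphisms with the stable orthogonal and symplectic Lie algebras over $J(k)$.

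The main obstacle I expect is the sign bookkeeping in the symplectic case: the prefactor $(-1)^{-1}$ in the definition of $\tau_{-1}$ combines with the alternating sign in $J_{-1} = \sum_i (-1)^i E_{i,-1-i}$ in such a way that reconstructing $\tau_C(X) = -J_C({}^tX) J_C$ on the block algebra requires carefully tracking the parity of $i$ inside each block $(i+nk, -1-i-nk)$. The fact that $n = 2l$ is even makes $(-1)^{i+nk} = (-1)^i$ constant along each residue class, which is what ultimately lets the sign pattern reproduce $\tau_C$ cleanly; the parallel sign-free analysis yields $\tau_D$ in the even-orthogonal case. Beyond signs, the only other point requiring care is that the induced inner anti-involution on $J(k)$ is indeed an anti-involution (namely the usual transposition of Jacobi matrices), so that $\fo_{\odd}(J(k))$, $\fsp(J(k))$ and $\fo_{\even}(J(k))$ are well-defined inputs for the inductive-limit mechanism and stably match the homologies of their finite-$l$ layers.
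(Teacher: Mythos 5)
Your proposal is correct and follows essentially the same route as the paper, which itself only asserts that the claim is proved ``in a way similar to'' Lemma \ref{lemma_stable1}: a block decomposition $\Phi_I$ along a section of $\Z\twoheadrightarrow\Z/n\Z$ chosen symmetric for the relevant involution, followed by passage to the inductive limit. One small correction to your final remark: the anti-involution induced on the inner copy of $J(k)$ by your block computation is not the plain transposition $E_{r,s}\mapsto E_{s,r}$ but the reflection across the anti-diagonal $E_{r,s}(a)\mapsto E_{-s,-r}(a)$ (transposition conjugated by $\sum_i E_{i,-i}$), which is exactly the involution $\ast$ appearing in Theorem \ref{thm_LP}, so the identification feeds correctly into the Loday--Procesi computation.
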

Thanks to Theorem 5.5 in \cite{LP}, due to J. L. Loday and C. Procesi, we obtain
\begin{theorem}\label{thm_LP} Let $\ast:J(k) \rightarrow J(k)$ be the anti-involution satisfying
\[ E_{r,s}(a)^\ast=E_{-s,-r}(a) \qquad a\in k. \]
Then, for $\fg=\fo_J^{\odd}, \fsp_J$ and $\fo_J^{\even}$, we have
\[ \prim(H_\bullet(\fg(k)))={}_{-1}HD_{\bullet-1}(J(k)), 
\]
where ${}_{-1}HD_\bullet(\cdot )$ signifies the skew-dihedral homology. 
\end{theorem}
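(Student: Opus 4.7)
The approach parallels the $\fg J(k)$ case treated in Section \ref{sect_3}: invoke a stability result to pass to the stable limit of classical Lie algebras over the associative algebra $J(k)$, then apply the appropriate orthogonal/symplectic structure theorem to extract the primitive part. First, I would apply Lemma \ref{lemma_stable2} to obtain, for each $\fg \in \{\fo_J^{\odd}, \fsp_J, \fo_J^{\even}\}$, an isomorphism $H_\bullet(\fg(k)) \cong H_\bullet(\fg'(J(k)))$, where $\fg'$ is the corresponding stable classical Lie algebra $\fo_{\odd}, \fsp, \fo_{\even}$. Since this isomorphism arises from a Lie algebra embedding compatible with the diagonal and the K\"{u}nneth maps defining the comultiplication, it restricts to an isomorphism on primitive parts.

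Second, I would feed these three cases into Theorem 5.5 of \cite{LP}, the orthogonal/symplectic analogue of the Loday--Quillen--Tsygan theorem recalled as Theorem \ref{thm_LQT}. Given an associative unital $k$-algebra $R$ with a $k$-linear anti-involution, this result identifies $\prim H_\bullet(\fo(R))$ and $\prim H_\bullet(\fsp(R))$ with either $HD_{\bullet-1}(R)$ or ${}_{-1}HD_{\bullet-1}(R)$, the precise flavour being dictated by the sign conventions built into the defining anti-involution on $R$ and into the symmetric vs.\ antisymmetric bilinear form on the ambient module.

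The main obstacle, and the actual content of the proof, is the bookkeeping of signs needed to obtain the uniform answer ${}_{-1}HD_{\bullet-1}(J(k))$ in all three cases. The anti-involutions $\tau_B, \tau_C, \tau_D$ on $\fgl_\infty(J(k))$ differ from the naive transpose by the matrices $J_B, J_C, J_D$, which involve the anti-diagonal shift (even vs.\ odd) and, in the symplectic case, the parity sign $(-1)^i$. Under the block-decomposition isomorphism $\fg J(k) \cong \fgl_n(J(k))$ of Section \ref{sect_3}, these outer sign data split into two pieces: an entry-wise anti-involution of $J(k)$ and an overall sign contributing to the action of the generator $y$ of the dihedral group $D_{n+1}$ on $J(k)^{\otimes(n+1)}$. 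I would verify that, in each of the three cases, the entry-wise piece collapses in the stable limit to the anti-involution $\ast$ given by $E_{r,s}(a)^\ast = E_{-s,-r}(a)$, while the remaining sign always produces the \emph{skew}-dihedral coinvariants rather than the dihedral ones.

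The hard part is precisely this uniformity: naively one expects orthogonal algebras to yield $HD$ and symplectic ones ${}_{-1}HD$ (or the opposite), so the claim that $\fo_J^{\odd}, \fsp_J$ and $\fo_J^{\even}$ all land on ${}_{-1}HD_{\bullet-1}(J(k))$ depends on a nontrivial cancellation between the parity sign in $J_C$ and the shift of the anti-diagonal in $J_B$ versus $J_D$. Once this sign computation is carried out and matched against the hypotheses of the Loday--Procesi theorem, the three identifications of Theorem \ref{thm_LP} follow at once.
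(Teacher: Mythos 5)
Your proposal follows the paper's own route exactly: reduce to the stable algebras $\fo_{\odd}(J(k))$, $\fsp(J(k))$, $\fo_{\even}(J(k))$ via Lemma \ref{lemma_stable2} and then invoke Theorem 5.5 of Loday--Procesi, with the only real work being the identification of the induced anti-involution $\ast$ on $J(k)$ and the sign of the $y$-action. One small reassurance: the uniformity you worry about is already built into Loday--Procesi, whose theorem gives the skew-dihedral homology ${}_{-1}HD_{\bullet-1}$ for \emph{both} the orthogonal and the symplectic stable algebras, so no delicate cancellation between the $B$, $C$, $D$ cases is needed beyond checking that each $\tau$ induces the stated $\ast$.
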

It is slightly technical, but one can show (cf. \cite{FI2}) that
\[ {}_{-1}HD_{\bullet-1}(J(k))=HD_{\bullet-2}(k). \]
Notice that
$HD_p(k)=k$ for $p \equiv 0 \,\text{mod}\, 4$ and $HD_p(k)=0$ otherwise. Thus, we have
\begin{theorem}[cf. \cite{FI2}]\label{thm_main2} For $\fg=\fo_J^{\odd}, \fsp_J$ and $\fo_J^{\even}$, $\prim(H_\bullet(\fg(k)))$ is the graded $k$-vector space whose $p$th graded component is 
\[ \prim(H_\bullet(\fg(k)))_p=\begin{cases} \, k\, \quad & p \equiv 2 (4), \\ \, 0 \, \quad & \text{otherwise}. \end{cases}
\]
\end{theorem}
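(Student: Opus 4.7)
The plan is to chain together the results already established to reduce the computation to the dihedral homology of the ground field $k$.

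First, Lemma \ref{lemma_stable2} identifies $H_\bullet(\fg(k))$ with the homology of the corresponding stable orthogonal or symplectic Lie algebra over $J(k)$, and Theorem \ref{thm_LP} of Loday--Procesi then expresses its primitive part as the shifted skew-dihedral homology ${}_{-1}HD_{\bullet-1}(J(k))$ of the associative algebra $J(k)$ endowed with the anti-involution $\ast \colon E_{r,s}(a) \mapsto E_{-s,-r}(a)$. This reduces the original Lie-algebraic question to a purely associative one about the algebra $J(k)$.

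Second, the core step is to establish the identification ${}_{-1}HD_{\bullet-1}(J(k)) \cong HD_{\bullet-2}(k)$. My strategy is to exploit the twisted group algebra presentation $J(k) \cong A\{\Z\}$ from Section \ref{sect_3} and to trace how $\ast$ decomposes under it. From the explicit isomorphism, the correspondence $E_{r,s} \leftrightarrow e_r \otimes [s-r]$ sends $\ast$ to the involution $e_r \otimes [i] \mapsto e_{-r-i} \otimes [i]$, which combines the involution $i \mapsto -i$ of the group $\Z$ with an idempotent shift on $A$. The plan is then to develop a (skew-)dihedral analog of Stefan's spectral sequence from Section \ref{sect_3}, converging to ${}_{-1}HD_{p+q}(A\{\Z\})$ with $E^2$-page built from the homology of $k[\Z]$, equipped with the induced $\Z/2\Z$-action, with coefficients in the appropriate (skew-)dihedral-type homology of $A$. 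Because $k[\Z]$ has global dimension at most one, this spectral sequence collapses at $E^2$ and yields a single degree shift, in precise parallel with the Hochschild computation $HH_p(J(k)) \cong HH_{p-1}(k)$ already carried out in Section \ref{sect_3}.

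Finally, combining these two reductions with the classical fact $HD_p(k) = k$ for $p \equiv 0 \pmod{4}$ and $HD_p(k) = 0$ otherwise gives $\prim(H_p(\fg(k))) = HD_{p-2}(k) = k$ exactly when $p \equiv 2 \pmod{4}$, which is the assertion.

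The main obstacle in this plan is the second step. Constructing the (skew-)dihedral analog of the Stefan spectral sequence and verifying its $E^2$-page under the mixed involution described above is delicate. Particular care is needed to keep track of the sign conventions distinguishing skew-dihedral from dihedral homology, so that the final degree shift produces $HD_{\bullet-2}(k)$ rather than a neighboring shift, and to confirm that the coefficient system on $A$ collapses to the dihedral homology of $k$ itself. This is precisely the ``slightly technical'' point alluded to in the excerpt and carried out in detail in \cite{FI2}.
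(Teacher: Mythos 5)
Your proposal follows the paper's own argument exactly: reduce via Lemma \ref{lemma_stable2} and the Loday--Procesi result (Theorem \ref{thm_LP}) to ${}_{-1}HD_{\bullet-1}(J(k))$, invoke the identification ${}_{-1}HD_{\bullet-1}(J(k)) \cong HD_{\bullet-2}(k)$, and conclude from the known values of $HD_\bullet(k)$. The paper likewise treats that middle identification as the technical core and defers it to \cite{FI2}, so your sketch of a dihedral analogue of Stefan's spectral sequence is a reasonable elaboration of the same route rather than a different one.
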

\begin{remark} By Theorems \ref{thm_main1} and \ref{thm_main2}, the support of 
$\prim(H_\bullet(\fg(k)))$ for $\fg=\, \fg J, \, \fo_J^{\odd}, $\, $\fsp_J$ and $\fo_J^{\even}$, i.e., those integers $p$ where $\prim(H_\bullet(\fg(k)))_p \neq 0$, are exactly the twice of the \emph{exponents} of the Weyl group of type $A_J, B_J, C_J$ and $D_J$, respectively. 
\end{remark}

\section{Generalizations and some further topics}\label{sect_5}
Here, we give a generalization on the coefficient ring, mention  some other related topics and open questions. 

\subsection{Lie algebras with coefficients in $R$}
The computations of Lie algebra homology given in Sections \ref{sect_2} and \ref{sect_3} can be generalized to the Lie algebra $\fg(R)$, where $\fg=\fg J, \fo_J^{\odd},\, \, \fsp_J$ and $\fo_J^{\even}$. Here $R$ is an associative unital $k$-algebra (equipped with a $k$-linear anti-involution $\bar{\cdot}: R \rightarrow R$ for $ \fg=\fo_J^{\odd}, \, \fsp_J$ and $\fo_J^{\even}$). 

The next theorem generalizes Theorem \ref{thm_main1}:
\begin{theorem}[cf. \cite{FI1}]\label{thm_main1-g} 
\begin{enumerate}
\item $\prim(H_\bullet(\fg J(R)))=HC_{\bullet-2}(R)$, 
\item $H_\bullet(\fg J(R)) \cong  S(HC_{\bullet-2}(R))$. 
\end{enumerate}
\end{theorem}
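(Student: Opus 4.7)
The plan is to transport the argument for Theorem~\ref{thm_main1} from scalars in $k$ to scalars in an arbitrary associative unital $k$-algebra $R$, passing through each step of the chain
\[ \fg J(R) \,\cong\, \fgl_\infty(J(R)) \;\overset{\mathrm{LQT}}{\rightsquigarrow}\; HC_{\bullet-1}(J(R)) \;\rightsquigarrow\; HC_{\bullet-2}(R). \]
First I would reprise the isomorphism $\Phi_I\colon \fg J(R) \to \fgl_n(J(R))$ of Section~3.2, which makes no use of the scalar nature of $k$ and extends verbatim. Passing to the inductive limit $n\to\infty$ gives $H_\bullet(\fg J(R)) \cong H_\bullet(\fgl_\infty(J(R)))$, and applying the Loday-Quillen-Tsygan theorem (Theorem~\ref{thm_LQT}) to the associative algebra $J(R)$ yields $\prim H_\bullet(\fg J(R)) \cong HC_{\bullet-1}(J(R))$. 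The entire problem thus reduces to computing the cyclic homology of $J(R)$.

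Next, $HH_\bullet(J(R))$ can be computed by reusing the twisted group algebra description of Example~3.6(2): the analogue $\tilde A := \prod_{i\in\Z} R\,e_i$ with orthogonal idempotents $e_i$ and $G=\Z$ acting by $1.e_i=e_{i-1}$ gives $J(R) \cong \tilde A\{G\}$. Stefan's spectral sequence
\[ E^2_{p,q} = H_p\bigl(k[\Z], H_q(\tilde A, J(R))\bigr) \;\Longrightarrow\; HH_{p+q}(J(R)) \]
collapses at $E^2$ since $k[\Z]$ is a PID. A direct calculation mirroring the $R=k$ case gives $H_q(\tilde A, J(R)) \cong \prod_{i\in\Z} HH_q(R)\,e_i$ with $\Z$ acting by shift; this shift has vanishing coinvariants and $HH_q(R)$-worth of invariants, so $H_p(k[\Z], \cdot)$ equals $HH_q(R)$ for $p=1$ and zero otherwise. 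Hence $HH_n(J(R)) \cong HH_{n-1}(R)$ for every $n$. Feeding this into Connes' periodicity sequence~\eqref{Connes} for $J(R)$ and comparing it with the periodicity sequence for $R$ shifted by one produces a ladder of six-term long exact sequences. The edge map of Stefan's spectral sequence realizing $HH_n(J(R)) \cong HH_{n-1}(R)$ is the shuffle product (cf.~the remark after Theorem~3.8), which agrees with Connes' $B$-operator, so the ladder commutes. A five-lemma induction in $n$ then yields $HC_n(J(R)) \cong HC_{n-1}(R)$ for every $n$, proving part~(1).

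Finally, part~(2) follows because $H_\bullet(\fg J(R))$ is a graded-commutative, cocommutative, connected Hopf algebra over a field of characteristic zero, hence is freely graded-symmetric on its primitives by Cartier-Milnor-Moore (cf.~\cite{Q}). The step I expect to be the main obstacle is the ladder comparison in the middle paragraph: the isomorphism $HH_n(J(R)) \cong HH_{n-1}(R)$ produced by Stefan's spectral sequence must genuinely commute with Connes' $B$-operator, which requires carefully tracking the edge homomorphism against the differential in the $(b,B)$-bicomplex. Verifying this naturality is the technical heart of the theorem; the rest is a matter of assembling ingredients that are already in place in the $R=k$ case.
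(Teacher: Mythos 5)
Your proposal follows essentially the same route as the paper's (very brief) proof: reduce to $\fgl_\infty(J(R))$ via the block isomorphism, apply Loday--Quillen--Tsygan to get $\prim H_\bullet(\fg J(R))\cong HC_{\bullet-1}(J(R))$, compute $HH_\bullet(J(R))\cong HH_{\bullet-1}(R)$ with Stefan's spectral sequence for the twisted group algebra, and then pass to cyclic homology. Your SBI-ladder/five-lemma argument, together with the compatibility of the shuffle-product edge map with the cyclic structure that you correctly flag as the technical heart, is exactly the content the paper compresses into ``analyzing carefully this isomorphism'' and defers to \cite{FI1}.
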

The proof goes as follows. As in the case $R=k$, By Theorem \ref{thm_LQT}, the primitive part $\prim(H_\bullet(\fg J(R)))$ is given by $HC_{\bullet-1}(J(R))$. Now, D. Stefan's spectral sequence allows us to show an isomorphism $HH_{\bullet}(J(R)) \cong HH_{\bullet-1}(R)$. 
Analyzing carefully this isomorphism, it can be shown that $HC_{\bullet-1}(J(R))\cong HC_{\bullet-2}(R)$. 

The above proof also allows us to obtain a generalization of Theorem \ref{thm_main2}:
\begin{theorem}[cf. \cite{FI2}]\label{thm_main2-g} Set $\fg=\fo_J^{\odd}, \fsp_J$ and $\fo_J^{\even}$.
\begin{enumerate}
 \item $\prim(H_\bullet(\fg(k)))=HD_{\bullet-2}(R)$, 
 \item $H_\bullet(\fg(R)) \cong S(HD_{\bullet-2}(R))$.
 \end{enumerate}
 \end{theorem}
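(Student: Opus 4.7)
The plan is to mirror the proof of Theorem \ref{thm_main1-g}, substituting the Loday--Procesi theorem for the Loday--Quillen--Tsygan theorem at every stage, and replacing cyclic by skew-dihedral homology.

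First, I would upgrade Lemma \ref{lemma_stable2} to arbitrary coefficients in $R$. The isomorphism $\fg J(R) \cong \fgl_n(J(R))$ from the block decomposition (Subsection on the isomorphism $\fg J(k) \cong \fgl_n(J(k))$) is compatible with the anti-involution if one chooses the section $I \subset \Z$ symmetrically around $0$, so the same inductive limit argument produces $H_\bullet(\fg(R)) \cong H_\bullet(\fg(J(R)))$ where $\fg$ on the right denotes $\fo_\odd, \fsp$ or $\fo_\even$ of the stable limit. Next, Theorem \ref{thm_LP} of Loday--Procesi extends verbatim to coefficients in an associative unital $k$-algebra $S$ with anti-involution (in our case $S = J(R)$, with the anti-involution $E_{r,s}(a)^\ast = E_{-s,-r}(\bar a)$), giving
\[ \prim(H_\bullet(\fg(J(R)))) \cong {}_{-1}HD_{\bullet-1}(J(R)). \]

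The substantive step is then to identify ${}_{-1}HD_{\bullet-1}(J(R))$ with $HD_{\bullet-2}(R)$. As in the cyclic case, I would write $J(R) \cong A\{G\}$ with $A = \prod_{i \in \Z} R\cdot e_i$ and $G = \Z$, noting that the anti-involution $\ast$ descends to an involution on $A$ that simultaneously swaps $e_i \leftrightarrow e_{-i}$ and inverts the $G$-action. With this extra structure, Stefan's spectral sequence has to be refined to an equivariant version carrying the dihedral group action; the $E^2$-page then computes equivariant group homology of $\Z$ with values in the Hochschild complex of $A$, twisted by the involution. Using the shuffle-product description of the edge map (as flagged in the Remark after Stefan's theorem, which was used in \cite{FI2}), the $\Z/2$-action interchanges the two halves of $A$ and acts on each $HH_q(R)\cdot e_i$ by the involution of $R$. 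Since $k[\Z]$ has global dimension $1$, the spectral sequence collapses at $E^2$ and what survives is precisely the dihedral complex of $R$ shifted by one in degree; tracing signs through the shuffle identification converts the skew-dihedral structure of $J(R)$ into the dihedral structure of $R$, yielding ${}_{-1}HD_{\bullet-1}(J(R)) \cong HD_{\bullet-2}(R)$.

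Part (2) is then formal: since $H_\bullet(\fg(R))$ is a graded commutative, cocommutative Hopf algebra (just as for $\fg J$), it is, by the Milnor--Moore / Quillen theorem, the graded symmetric algebra on its primitive part. Combined with (1) this gives $H_\bullet(\fg(R)) \cong S(HD_{\bullet-2}(R))$.

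The main obstacle I anticipate is the sign/involution bookkeeping in the third step, namely confirming that the dihedral refinement of Stefan's spectral sequence exists in the form we need, that it collapses at $E^2$, and that the shuffle identification carries the \emph{skew}-dihedral structure of the twisted group algebra $J(R)$ to the ordinary \emph{dihedral} structure of $R$ after the degree shift. Everything else in the proof is a straightforward transcription of the cyclic case.
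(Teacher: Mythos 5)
Your proposal follows essentially the same route as the paper: the paper's own (very terse) argument is exactly the chain you describe --- the $R$-coefficient version of Lemma \ref{lemma_stable2}, the Loday--Procesi theorem giving $\prim(H_\bullet(\fg(J(R))))={}_{-1}HD_{\bullet-1}(J(R))$, the identification ${}_{-1}HD_{\bullet-1}(J(R))\cong HD_{\bullet-2}(R)$ via the (dihedral refinement of the) Stefan spectral sequence and the shuffle-product edge map, and finally the Hopf-algebra structure theorem for part (2). The step you flag as the main obstacle is precisely the one the paper itself calls ``slightly technical'' and defers to \cite{FI2}, so your outline matches the intended proof.
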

 
\subsection{Rank and trace functionals}
For any $n \in \Z_{>1}$, let $\fg J_n(k)$ be the image of the composition $\widetilde{\fg J_n}(k) \hookrightarrow \widetilde{\fg J}(k) \twoheadrightarrow \fg J(k)$. Let $\fg J_\infty(k)$ be the subalgebra of $\fg J(k)$ generated by all of the $\fg J_n(k)$ ($n \in \Z_{>1}$).  Viewed as an associative algebra,  $\fg J_n(k)$ and $\fg J_\infty(k)$ will be denoted by $J_n(k)$ and $J_\infty(k)$, respectively. We don't know whether this algebra $J_\infty(k)$ is a \emph{von Neumann regular ring}, i.e., for any $A \in J_\infty(k)$, there exists $X \in J_\infty(k)$ such that $AXA=A$. Nevertheless, there is a so-called \emph{rank functional} defined as follows. For $A=(a_{i,j})_{i,j \in \Z} \in J_\infty(k)$, set
\[ \Rank(A)= \lim_{n \rightarrow \infty} \frac{1}{2n+1}\rank(A_n), \]
where, the matrix $A_n$ of size $2n+1$ is defined by $(a_{i,j})_{-n \leq i,j\leq n}$ and $\rank(\,\cdot\,)$ signifies the rank of a finite size matrix. 
\begin{remark}
It was pointed out by B. Feigin and B. Tsygan in \cite{FT} that one can define the \emph{trace functional} $\Tr: J_\infty(k) \rightarrow k$ by 
\[ \Tr(A)=\lim_{n \rightarrow \infty} \frac{1}{2n+1}\tr(A_n),
\]
where $\tr( \cdot)$ is the trace of a finite size matrix. They used this functional to describe 
non trivial cocycles of the cohomology $H^\bullet(\fg J_\infty(k))$.
\end{remark}

%\emph{Question}: What can we state about the trace functional? Why is it important, where does it show up? (In our proof we did not use it.)
\smallskip

It turns out that $\mathrm{Im}(\Rank)=[0,1] \cap \Q$. Indeed, this rank functional is a rank function on $J_\infty(k)$ in the sense of J. von Neumann \cite{vN}, i.e., it satisfies,
\begin{enumerate}
\item $\Rank(I)=1$, 
\item $\Rank(xy)\leq \Rank(x), \Rank(y)$,
\item $\Rank(e+f)=\Rank(e)+\Rank(f)$ for all orthogonal idempotents $e,f \in J_\infty(k)$, and
\item $\Rank(x)=0$ if and only if $x=0$.
\end{enumerate}
Hence, let $\widehat{J_\infty}(k) \subset J(k)$ be the $\Rank$-completion of $J_\infty(k)$. 
% I. Halperin \cite{H} showed that $\widehat{J_\infty}(k)$ is again von Neumann regular ring. 
We denote the continuous extension of $\Rank(\, \cdot\,)$ to $\widehat{J_\infty}(k)$ by $\overline{\Rank}(\,\cdot\,)$ (cf. \cite{G}). 
%The essential domain of the rank functional $\Rank$, that is, the subspace $D_\infty(k)$ of $J(k)$ on which the rank functional is well-defined and there is no proper subspace of $J(k)$ containing $D_\infty(k)$, as a proper subspace, on which this rank functional is well-defined. 
\begin{proposition} $\mathrm{Im}(\overline{\Rank})=[0,1]$. 
\end{proposition}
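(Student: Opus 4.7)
The plan is to realize each $r \in [0,1]$ as $\overline{\Rank}(e)$ for some $e \in \widehat{J_\infty}(k)$ obtained as the rank-limit of a nested sequence of periodic diagonal idempotents in $J_\infty(k)$. The inclusion $\mathrm{Im}(\overline{\Rank}) \subseteq [0,1]$ is immediate: the standard von Neumann axioms (in particular subadditivity) give $|\Rank(x) - \Rank(y)| \leq \Rank(x-y)$, so $\Rank$ is uniformly continuous on $J_\infty(k)$ with respect to the rank metric and extends by continuity to $\overline{\Rank}$; since $\Rank$ takes values in $[0,1] \cap \Q$ and $[0,1]$ is closed in $\R$, the image of $\overline{\Rank}$ lies in $[0,1]$.

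For the reverse inclusion, given $r \in [0,1]$, fix a binary expansion $r = \sum_{k \geq 1} a_k 2^{-k}$ with $a_k \in \{0,1\}$ and set $r_n = \sum_{k=1}^n a_k 2^{-k}$. For each $n \geq 1$ I would construct a diagonal idempotent $e_n \in J_{2^n}(k) \subseteq J_\infty(k)$ of period $2^n$ with $\Rank(e_n) = r_n$, arranged so that $e_n e_{n+1} = e_{n+1} e_n = e_n$. Concretely, encode $e_n$ by a subset $S_n \subseteq \{0, 1, \ldots, 2^n - 1\}$ of size $2^n r_n$, via $e_n = \sum_{i \in S_n} \sum_{j \in \Z} E_{i + 2^n j,\, i + 2^n j}$; the nestedness translates to $S_n \cup (S_n + 2^n) \subseteq S_{n+1}$ when both are viewed inside $\{0, \ldots, 2^{n+1} - 1\}$. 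Since $|S_n \cup (S_n + 2^n)| = 2|S_n| = 2^{n+1} r_n$ and $|S_{n+1}| = 2^{n+1} r_{n+1} = 2|S_n| + a_{n+1}$, one can always make the required inductive choice by adding at most one new element to the doubled set.

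With $(e_n)$ so constructed, each difference $e_{n+1} - e_n$ is a diagonal idempotent orthogonal to $e_n$, so iterating axiom (3) of the von Neumann rank yields $\Rank(e_m - e_n) = r_m - r_n$ for $m > n$. Since $(r_n)$ is Cauchy in $\R$, $(e_n)$ is Cauchy in the rank metric and hence converges to some $e \in \widehat{J_\infty}(k)$ by completeness of the rank-completion; continuity of $\overline{\Rank}$ then gives $\overline{\Rank}(e) = \lim r_n = r$, proving $[0,1] \subseteq \mathrm{Im}(\overline{\Rank})$.

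The argument is elementary once the background on von Neumann rank functions (cf.\ \cite{vN, G}) and the density of dyadic rationals in $[0,1]$ are invoked; there is no deep obstacle. The one point deserving care is the explicit combinatorial nesting $S_n \cup (S_n + 2^n) \subseteq S_{n+1}$, which guarantees both that $e_{n+1} - e_n$ is again an idempotent (so its rank is computed additively) and that each $e_n$ lies in the generating subalgebra $J_{2^n}(k) \subseteq J_\infty(k)$; both issues are settled by the cardinality count above.
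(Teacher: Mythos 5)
Your proof is correct, and it reaches the goal by a somewhat different route than the paper. Both arguments realize a given $r\in[0,1]$ as the asymptotic density of ones on the diagonal of a $0/1$ diagonal matrix. The paper does this by writing down a single, generally non-periodic, diagonal matrix $D=\mathrm{diag}(d_i)_{i\in\Z}$ whose truncations $\mathrm{diag}(d_i)_{\vert i\vert\le n}$ have rank $r_n$ with $r_n/(2n+1)\to x$, after reducing to irrational $x$ (the rationals being already covered by $\mathrm{Im}(\Rank)=[0,1]\cap\Q$). You instead build a nested Cauchy sequence of \emph{periodic} diagonal idempotents $e_n\in J_{2^n}(k)\subseteq J_\infty(k)$ tracking the binary expansion of $r$, and obtain the desired element as the limit in the completion. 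What your version buys is a cleaner justification of the one point the paper leaves implicit: why the constructed element actually belongs to $\widehat{J_\infty}(k)$ and why its $\overline{\Rank}$ equals the limit of the finite-stage ranks. The paper's $D$ is an aperiodic diagonal matrix in $J(k)$, so its membership in the $\Rank$-completion of $J_\infty(k)$ still requires exhibiting approximants from $J_\infty(k)$ in the rank metric --- essentially your sequence $(e_n)$ --- whereas in your argument this is automatic from completeness together with the orthogonal-idempotent computation $\Rank(e_m-e_n)=r_m-r_n$. You also record the easy containment $\mathrm{Im}(\overline{\Rank})\subseteq[0,1]$, which the paper takes for granted. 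The only mild caveat is that the subadditivity $\vert\Rank(x)-\Rank(y)\vert\le\Rank(x-y)$ you invoke is not among the four axioms listed in the paper; it does hold for this $\Rank$ (it follows from subadditivity of the rank of finite matrices) and is in any case needed even to define the rank metric and its completion, so nothing is lost.
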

\begin{proof}
It suffices to show that for any $x \in [0,1] \cap \Q^c$, there exists a diagonal $D=\mathrm{diag}(d_i)_{i \in \Z}$ such that $\overline{\Rank}(D)=x$.  \\
Let $\{r_n\}_{n \in \Z_{\geq 0}} \subset \Z_{>0}$ be the sequence defined as follows. Set $r_0=1$. For $n>0$, let $r_n$ be the integer such that 
$\frac{r_n-1}{2n+1}<x<\frac{r_n}{2n+1}$. Such an integer exists since $x$ is irrational. It can be checked that if $0<x<\frac{1}{2}$, then $r_{n+1}-r_n \in \{0,1\}$, otherwise, $r_{n+1}-r_n \in \{1,2\}$. \\
Now, we define the sequence $\{d_i\}_{i \in \Z}$. Set $d_0=1$. Suppose that $\{d_i\}_{\vert i\vert \leq n}$ is defined. Choose $d_{\pm (n+1)} \in \{0,1\}$ in such a way that $\sharp\{ i \in \{\pm(n+1)\}\vert d_i=1\}=r_{n+1}-r_n$. 
%If $i_{n+1}=i_n$, then set $d_{n+1}=d_{-(n+1)}=0$. If $i_{n+1}-i_n=1$, then set $d_{n+1}=1$ and $d_{-(n+1)}=0$. Otherwise, set $d_{n+1}=d_{-(n+1)}=1$. 
For any such choice, it follows that the rank of the diagonal matrix $\mathrm{diag}(d_i)_{\vert i\vert \leq n}$ is $r_n$ for any $n \in \Z_{\geq 0}$. Hence, we have $\overline{\Rank}(D)=x$ by construction. 
\end{proof}

There might be a von Neumann regular subalgebra of $J(k)$ that has not been explored up to now.  
%\emph{Question:} What other interesting properties the rank functional has?
\smallskip

Here are some questions: 
\begin{enumerate}
\item Does  $\widehat{J_\infty}(k)$ admit a Lie algebra structure ? If yes,
\begin{enumerate}
\item[i)] and if the Lie algebra $\fg \widehat{J_\infty}(k)$ is perfect, describe its universal central extension.
\item[ii)] More generally, can we have a description of the (co)homology ring of Lie algebra $\fg \widehat{J_\infty}(k)$ ?
\end{enumerate}
\item Is $J_\infty(k)$ von Neumann regular algebra ? If yes, so is $\widehat{J_\infty}(k)$ (cf. \cite{G}).
\begin{enumerate}
\item[i)] What kind of additional properties, do they have ?
\end{enumerate}
\end{enumerate}

\subsection{More about $\fg J_\infty(k)$}
In \cite{FT}, B. Feigin and B. Tsygan determined the structure of the cohomology ring $H^\bullet(\fg J(k))$ which states, for each $i>0$, there exists, up to scalar,  unique $c_i \in H^{2i}(\fg J(k))$ such that 
\[ H^\bullet(\fg J (k)) \cong S^\bullet(\bigoplus_{i>0} kc_i). \]
This follows from Theorem \ref{thm_main1}.  The natural inclusion $\fg J_\infty(k) \hookrightarrow \fg J(k)$ induces a surjective map $H^\bullet(\fg J_\infty(k)) \twoheadrightarrow H^\bullet(\fg J (k))$. Indeed, the showed that, for each $i>0$, there exists, up to scalar,  unique $\xi_i \in H^{2i-1}(\fg J_\infty(k))$ such that the next short sequence is exact: 
\[ 0 \longrightarrow \bigwedge{}^\bullet(\bigoplus_{i>0}k \xi_i) \longrightarrow H^\bullet(\fg J_\infty(k))
\longrightarrow H^\bullet(\fg J(k)) \longrightarrow 0. \]
They even presented an explicit realization of cocycles. 

It may be an interesting problem to obtain such descriptions also for the Lie algebras $\fg_{J_\infty}(k):=\fg_J(k) \cap \fg J_\infty(k)$, where $\fg_J=\fo_J^{\odd}, \fsp_J$ and $\fo_J^{\even}$. 

\subsection*{Acknowledgments}
The authors thank Max Karoubi and Andrey Lazarev for helpful discussions.  Kenji Iohara would like to thank the organizers for giving him the opportunity to present some of the results in this article.

\end{document}